\newcommand{\R}{\mathbf{R}}
\newcommand{\Z}{\mathbf{Z}}
\newcommand{\Ker}{\textrm{Ker}}
\newcommand{\Hom}{\textrm{Hom}}
\newcommand{\Foam}{\textrm{Foam}}
\newcommand{\slt}{\mathfrak{sl}_2}
\newtheorem{theorem}{Theorem}[section]
\newtheorem{proposition}{Proposition}[section]
\newtheorem{corollary}{Corollary}[section]
\newtheorem{lemma}{Lemma}[section]
\newtheorem{remark}{Remark}[section]
\theoremstyle{definition}
\newtheorem{definition}{Definition}[section]
\begin{document}

\title{An \(\mathfrak{sl}_2\) action on link homology of \(T(2,k)\) torus links}
\author{Felix Roz}
\maketitle

\begin{abstract}
We determine an \(\mathfrak{sl}_2\)-module structure on the equivariant Khovanov--Rozansky homology of \((2,k)\)-torus links following the framework in \cite{qrsw-homology}.
\end{abstract}

\section{Introduction}
In 1984, Vaughan Jones discovered the Jones polynomial, an integral Laurent polynomial-valued invariant of oriented links that admits a combinatorial definition via the Kauffman bracket \cite{Jones_1985, Kauffman_1987}.
In 2000, Mikhail Khovanov defined a categorification of the Jones polynomial: a bigraded homology theory whose Euler characteristic is the Jones polynomial \cite{khovanov-homology}.
Khovanov's original construction involved a simple combinatorial definition which was shown to be functorial with respect to link cobordisms by several authors
\cite{cmw, sano, blanchet, caprau, beliakova}.
Further work by Khovanov and Rozansky gave rise to a homology theory which categorifies the \(\mathcal{U}_q(\mathfrak{sl}_N)\)-polynomial  using matrix factorizations \cite{kr-matrix}.
The work of Robert and Wagner provided an explicit evaluation formula which can be used to construct Khovanov--Rozansky homology with webs and foams in a combinatorial manner similar to Khovanov's original construction \cite{rw-evaluation}.

In order to understand the structure of the homology theory, Khovanov and Rozansky found an action of the positive half of the Witt algebra on HOMFLYPT homology \cite{kr-witt}.
Following this work, Qi, Robert, Sussan, and Wagner found that the Witt algebra acts on foams
and a copy of \(\mathfrak{sl}_2\) contained in the Witt algebra acts on link homology \cite{qrsw-foams, qrsw-homology}.
In this paper, we determine the structure of \(H^*(T_{2,k})\), the homology of a \((2,k)\)-torus link, as a representation of \(\mathfrak{sl}_2\).
Sections \ref{webs-foams}, \ref{action}, and \ref{link-homology} review webs, foams, the action of \(\mathfrak{sl}_2\), and link homology following \cite{qrsw-homology}.
Section \ref{homology-computation} contains the computation of the complex \(C^*(T_{2,k})\) in the \(\mathfrak{sl}_2\)-equivariant setting.
In section \ref{sl2-computation}, the homology \(H^*(T_{2,k})\) is decomposed as an \(\slt\)-representation into indecomposables.

\subsection*{Acknowledgments}
I am especially thankful to Joshua Sussan for introducing me to categorification, link homology, and foams and for guiding me through the work behind this paper. I am also thankful to Mikhail Khovanov, Alexis Gu\'{e}rin, Matt Hogancamp, Taketo Sano, and Josh Wang for helpful discussions and comments on earlier versions of this paper.

\section*{Conventions}

In this paper, \(\mathbf{k}\) is a field.
For any \(x \in \mathbf{k}\), define \(\overline{x} := 1- x\).
For any positive integer \(n\), let \(R_n := \mathbf{k}[x_1, \cdots, x_n]^{S_n}\) where \(\deg(x_i) =2\).
In this ring, \(e_i, p_i,\) and \(h_i\) denote the \(i\)th elementary, power sum, and complete homogeneous polynomials.

Let \(N\) be a fixed positive integer.
Elements of the ring \(R_N\) will be distinguished by capital letters.
For example, \(X_i, E_i, P_i, H_i\) refer to the indeterminates and the elementary, power sum, and complete homogeneous symmetric polynomials in \(R_N\).

\section{Webs and Foams}
In this section, we review \(\mathfrak{gl}_N\)-webs and foams. For more details see \cite{rw-evaluation}.
\label{webs-foams}
\begin{definition}
A \(\mathfrak{gl}_N\)-\textit{web} is a finite, oriented, trivalent graph \(\Gamma = (V, E)\) embedded in \(\R^2\),
together with a \textit{thickness function} \(\ell: E \to \Z_{> 0}\)
that satisfies a flow condition: no vertex may be a source or a sink and the sum of the thicknesses of the incoming edges must equal the sum of the thicknesses of the outgoing edges. The flow condition ensures that every vertex in a web is one of three local models:
\[
\includegraphics[valign=c]{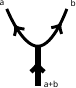}
\quad \textrm{or} \quad
\includegraphics[valign=c]{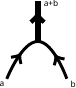}
\]
The first is called a \textit{split}, the second a \textit{merge}.
In these local pictures, the edge with thickness \(a+b\) is called the \textit{thick} edge relative to the vertex, while the edges with thicknesses \(a\) and \(b\) are called the \textit{thin} edges relative to the vertex.
\end{definition}

\begin{definition}
A \(\mathfrak{gl}_N\)-\textit{foam} in \(\R^2 \times [0,1]\) is a finite collection \(F\) of compact, oriented surfaces called facets, glued together along their boundaries
together with a \textit{thickness function} \(\ell: F \to \Z_{>0}\)
such that every point has a closed neighborhood homeomorphic to one of the following:
\begin{enumerate}
\item A disk,
\item A cylinder over a merge or a split, denoted \(Y^{(a,b)}\):
\[\includegraphics[height=3cm]{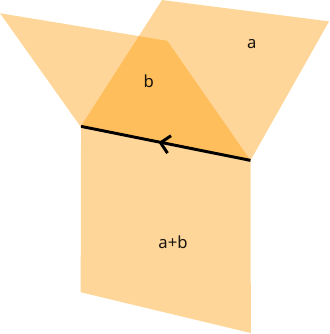},\]
\item A cone over the 1-skeleton of a tetrahedron, denoted \(T^{(a,b,c)}\):
\[\includegraphics[height=3cm]{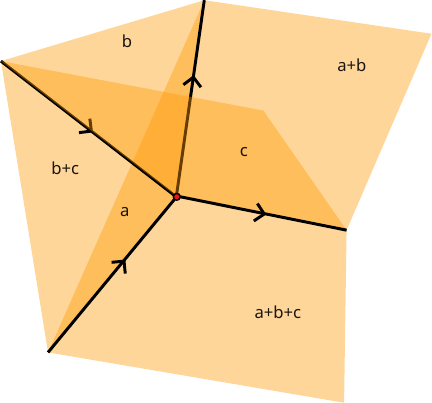}.\]
\end{enumerate}

The set of points of the second type is a collection of curves called the \textit{bindings}.
The points of the third type are called the \textit{singular vertices}.
For each foam \(F\), denote the set of facets by \(F^2\), the set of bindings by \(F^1\), and the set of singular vertices by \(F^0\).
In the pictures above, the bindings are marked by black lines and the singular vertices are marked by red dots.

In the \(Y^{(a,b)}\) case, the thicknesses of the facets agree with the thicknesses of the edges in the merge or split.
The binding is oriented so that it agrees with the thin facets and is opposite to the orientation of the thick facet.

The \textit{boundary} \(\partial F\) of \(F\) is the closure of the set of boundary points of facets that are not contained in any bindings.
A foam with empty boundary is called a \textit{closed foam}.

Finally, each facet \(f \in F\) may be given a \textit{decoration} \(p_f \in R_{\ell(f)} \otimes R_{N - \ell(f)}\).
If a facet is decorated by the identity, the decoration is omitted in diagrams.
On a facet with thickness \(l\),
the symbol \(\spadesuit_i\) denotes the decoration \(p_i \otimes 1 \in R_{l} \otimes R_{N-l}\).
The symbol
\(\hat{\spadesuit}_i\) denotes the decoration \(1 \otimes p_i \in R_{l} \otimes R_{N-l}\).
% \begin{equation}
% \label{decorations}
% \includegraphics[height=1.5cm,valign=c]{spade.pdf}
% :=
% P_k \, \includegraphics[height=1.5cm,valign=c]{blank.pdf} -
% \includegraphics[height=1.5cm,valign=c]{power.pdf}.
% \end{equation}
\end{definition}

\begin{definition}
For \(f \in F^2\), define
\[\deg_N(f) := \ell(f)(N - \ell(f))\chi(f).\]
Let \(F^1_{\--}\) denote the collection of bindings diffeomorphic to intervals and \(F^1_{\circ}\) the bindings diffeomorphic to circles.
For \(s \in F^1_{\--}\) with a neighborhood diffeomorphic to \(Y^{(a,b)}\), define
\[\deg_N(s) := ab + (a+b)(N-a-b).\]
For \(v \in F^0\) with a neighborhood diffeomorphic to \(T^{(a,b,c)}\), define
\[\deg_N(v) := ab + bc +ac + (a+b+c)(N - a - b -c).\]
Finally, for any decorated foam \(F\) with decorations \(\{p_f\}_{f \in F^2}\) define
\[
\deg_N(F) :=
\sum_{f \in F^2} \deg(P_f) -
\sum_{f \in F^2} \deg_N(f) +
\sum_{s \in F^1_{\--}} \deg_N(s) -
\sum_{v \in F^0} \deg_N(v).
\]
\end{definition}

\subsection{Basic Foams}
The following foams are called \textit{basic foams}. Every foam is isotopic to a foam which is composed of basic foams glued along their boundaries.

Polynomial:
\[
\includegraphics[height=2cm,valign=c]{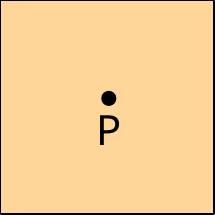}
\]

Associativity:
\[
\includegraphics[height=2cm,valign=c]{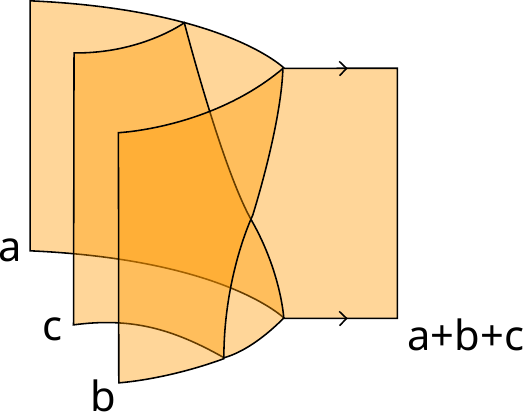}
\quad\quad\quad\quad
\includegraphics[height=2cm,valign=c]{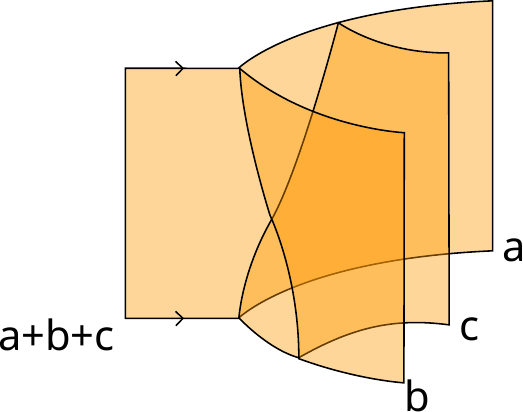}
\]

Digon cup and cap:
\[
\includegraphics[height=2cm,valign=c]{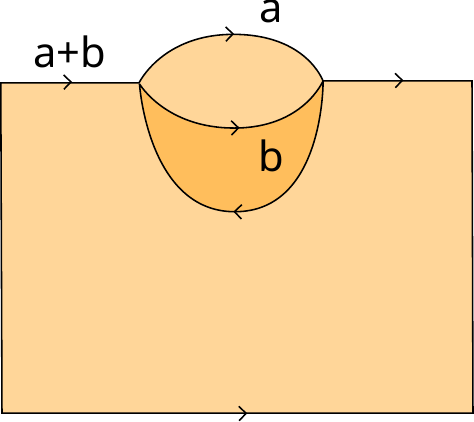}
\quad\quad\quad\quad
\includegraphics[height=2cm,valign=c]{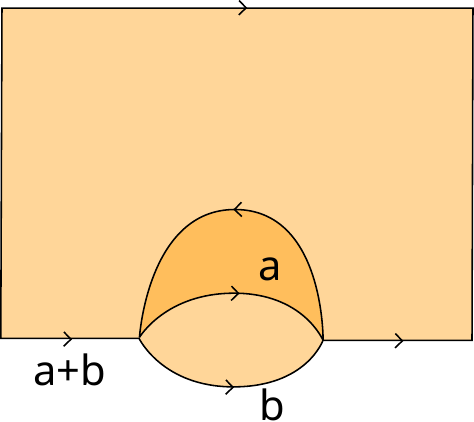}
\]

Zip and unzip:
\[
\includegraphics[height=2cm,valign=c]{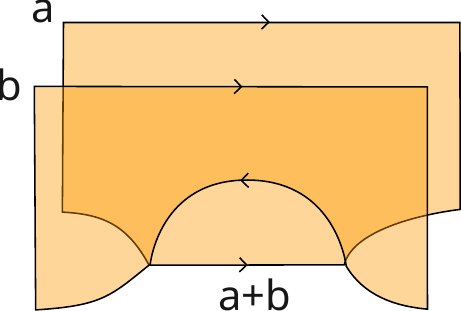}
\quad\quad\quad\quad
\includegraphics[height=2cm,valign=c]{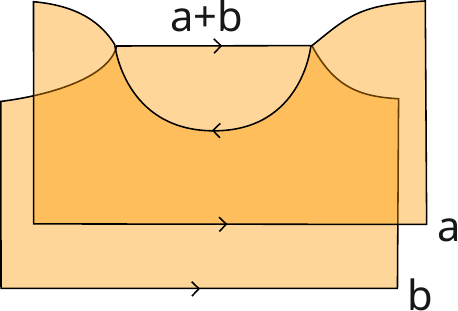}
\]

Cup and cap:
\[
\includegraphics[height=2cm,valign=c]{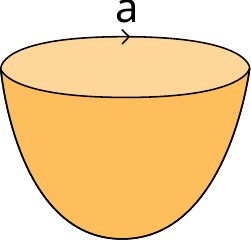}
\quad\quad\quad\quad
\includegraphics[height=2cm,valign=c]{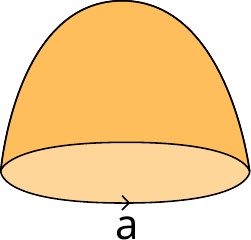}
\]

Saddle:
\[
\includegraphics[height=2cm,valign=c]{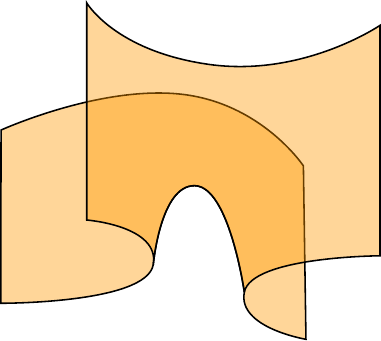}
\]

\subsection{The category of webs and foams}
\label{category-foam}
Suppose \(F\) is a foam embedded in \(\R^2 \times [0,1]\)
such that 
\(F_0 := F \cap (\R^2 \times \{0\})\)
and 
\(F_1 := F \cap (\R^2 \times \{1\})\)
are both webs.
Then \(F\) is called a foam between the webs \(F_0\) and \(F_1\).
By convention, foams are read from bottom to top in diagrams.
If \(G\) is another web with \(G_0 = F_1\), then \(G\) and \(F\) can be glued together along the common web to form a composition \(G \circ F\).
Finally, for any web \(W\) there is a foam \(W \times I\) called the identity foam for \(W\).

\begin{definition}
\label{category-def}
Let \(\widehat{\mathrm{Foam}}_N\) be the category whose objects are isotopy classes of webs and and whose morphisms are foams between webs with composition given by stacking and identities given by identity foams.
The category \(\Foam_N\) is formed from \(\widehat{\mathrm{Foam}}_N\) by adding formal degree shifts of objects and taking the \(R_N\)-linear additive closure.
\end{definition}

\subsection{Foam evaluation}
\label{foam-eval}
In \cite{rw-evaluation}, Robert and Wagner defined
an \textit{evaluation} \(\langle F \rangle \in R_N\)
for every closed \(\mathfrak{gl}_N\)-foam \(F\).
For any web \(\Gamma\), define an \(R_N\)-bilinear form \(\langle \cdot ; \cdot \rangle_N\) on \(\Hom_{\mathbf{Foam}}(\varnothing, \Gamma)\) by
\begin{equation}
\langle F ; G \rangle_N := \langle \overline{G} \circ F \rangle_N,
\end{equation}
where \(\overline{G}: \Gamma \to \varnothing\) is the reflection of a foam \(G: \varnothing \to \Gamma\) along the plane \(\R^2 \times \{1/2\}\).

\begin{definition}
\label{def-state-space}
For any web \(\Gamma\), 
\begin{equation}
\mathcal{F}_N(\Gamma) := \Hom(\varnothing, \Gamma) / \mathrm{Rad}\langle \cdot ; \cdot \rangle_N
\end{equation}
is called the \textit{state space} of \(\Gamma\).
By the universal construction of \cite{Blanchet_1995} the state space extends to a functor \(\mathcal{F}_N: \Foam_N \to R_N\text{-mod}\).
\end{definition}

\section{Action of \(\mathfrak{sl}_2\) on Foams}
\label{action}

In this section, we recall the action of the \(\slt\) on foams originally defined in \cite{qrsw-foams}.

Let \(\mathfrak{sl}_2\) be the Lie algebra over \(\mathbf{k}\) generated by symbols \(\mathbf{e}, \mathbf{h}, \mathbf{f}\) with the relations
\[
[\mathbf{h},\mathbf{e}] = 2\mathbf{e}, \quad [\mathbf{h}, \mathbf{f}]= -2 \mathbf{f}, \quad [\mathbf{e}, \mathbf{f}] = \mathbf{h}.
\]

The following action of \(\mathfrak{sl}_2\) on foams was defined in \cite{qrsw-homology}.

\begin{definition}[Action on polynomials]
\label{action-poly}
For every integer \(k \geq -1\), the linear differential operator \(L_k\) is defined on \(p \in R_n\) by
\begin{equation}
L_k \cdot p = - \sum_{i=1}^n x_i^{k+1} \frac{\partial}{\partial x_i}.
\end{equation}
The map \(\slt \to \mathrm{End}(R_n)\) given by
\begin{equation}
\begin{aligned}
\mathbf{e} &\mapsto L_{-1}\\
\mathbf{h} &\mapsto 2L_0 \\
\mathbf{f} &\mapsto -L_{1}
\end{aligned}
\end{equation}
defines an action of \(\mathfrak{sl}_2\) on \(R_n\).
\end{definition}

\begin{remark}
Since \(\deg(x_i) = 2\), the operator \(\mathbf{h}\) multiplies any polynomial by its degree. Therefore the \(\lambda\)-weight space of \(R_n\) as an \(\mathfrak{sl}_2\)-representation is the subspace of homogeneous polynomials with degree \(\lambda\).
\end{remark}

\begin{definition}[Action on basic foams]
\label{action-basic}
Let \(t_1, t_2 \in \mathbf{k}\) be fixed parameters.
The Lie algebra \(\mathfrak{sl}_2\) acts trivially on traces of isotopies and according to Definition \ref{action-poly} on polynomials.
The generator \(\mathbf{e}\) acts trivially on all other foams and the generators \(\mathbf{h}\) and \(\mathbf{f}\) act
according to the following rules on all other basic foams:

% \[
% \mathbf{h} \cdot
% \includegraphics[height=2cm,valign=c]{polynomial.pdf} =
% - \mathrm{deg}(P) \;
% \includegraphics[height=2cm,valign=c]{polynomial.pdf}
% \]

\[
\mathbf{h} \cdot 
\includegraphics[height=2cm,valign=c]{assoc-left.pdf}
=
\mathbf{h} \cdot
\includegraphics[height=2cm,valign=c]{assoc-right.pdf}
=
0
\]

\[
\mathbf{h} \cdot
\includegraphics[height=2cm,valign=c]{digon-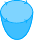}
=
ab(t_1+t_2)\;
\includegraphics[height=2cm,valign=c]{digon-cup.pdf}
\]

\[
\mathbf{h} \cdot
\includegraphics[height=2cm,valign=c]{digon-cap.pdf}
=
ab(\overline{t_1}+\overline{t_2})\;
\includegraphics[height=2cm,valign=c]{digon-cap.pdf}
\]

\[
\mathbf{h} \cdot
\includegraphics[height=2cm,valign=c]{basic-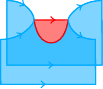}
=
-ab(\overline{t_1}+\overline{t_2})\;
\includegraphics[height=2cm,valign=c]{basic-unzip.pdf}
\]

\[
\mathbf{h} \cdot
\includegraphics[height=2cm,valign=c]{zip.pdf}
=
-ab({t_1}+{t_2})\;\;
\includegraphics[height=2cm,valign=c]{zip.pdf}
\]

\[
\mathbf{h} \cdot
\includegraphics[height=2cm,valign=c]{basic-cup.pdf}
=
a(N - a)\;
\includegraphics[height=2cm,valign=c]{basic-cup.pdf}
\]

\[
\mathbf{h} \cdot
\includegraphics[height=2cm,valign=c]{cap.pdf}
=
a(N - a)\;
\includegraphics[height=2cm,valign=c]{cap.pdf}
\]

\[
\mathbf{h} \cdot
\includegraphics[height=2cm,valign=c]{saddle.pdf}
=
-a(N - a)\;
\includegraphics[height=2cm,valign=c]{saddle.pdf}
\]

% \[
% \mathbf{f} \cdot
% \includegraphics[height=2cm,valign=c]{polynomial.pdf} =
% - \sum_i x_i^2 \frac{\partial}{\partial x_i}(P)\;
% \includegraphics[height=2cm,valign=c]{polynomial.pdf}
% \]

\[
\mathbf{f} \cdot 
\includegraphics[height=2cm,valign=c]{assoc-left.pdf}
=
\mathbf{f} \cdot
\includegraphics[height=2cm,valign=c]{assoc-right.pdf}
=
0
\]

\[
\mathbf{f} \cdot
\includegraphics[height=2cm,valign=c]{digon-cup.pdf}
=
-t_1\;
\includegraphics[height=2cm,valign=c]{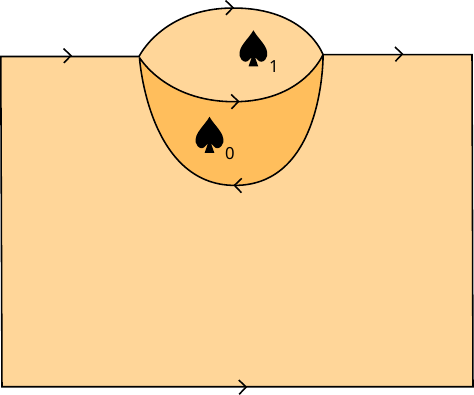}
-t_2\;
\includegraphics[height=2cm,valign=c]{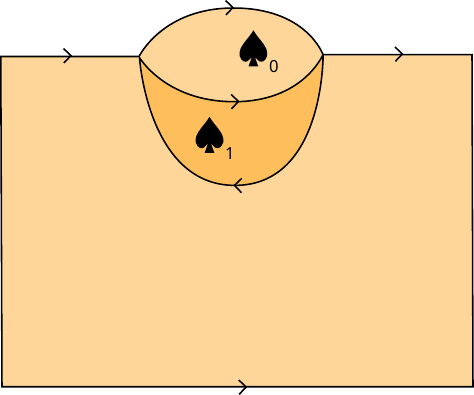}
\]

\[
\mathbf{f} \cdot
\includegraphics[height=2cm,valign=c]{digon-cap.pdf}
=
-\overline{t_1}\;
\includegraphics[height=2cm,valign=c]{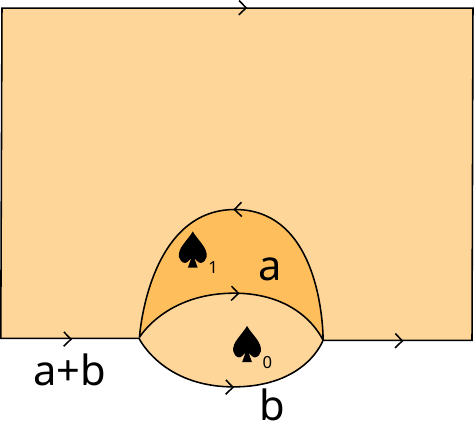}
-\overline{t_2}\;
\includegraphics[height=2cm,valign=c]{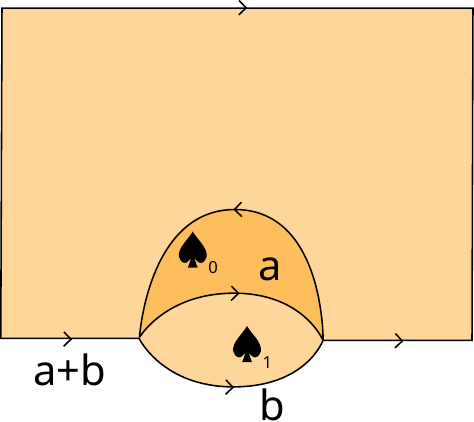}
\]

\[
\mathbf{f} \cdot
\includegraphics[height=2cm,valign=c]{basic-unzip.pdf}
=
\overline{t_1}\;
\includegraphics[height=2cm,valign=c]{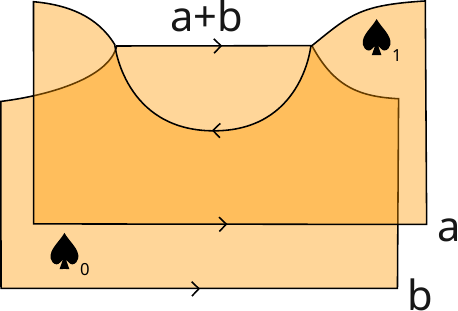}
+
\overline{t_2}\;
\includegraphics[height=2cm,valign=c]{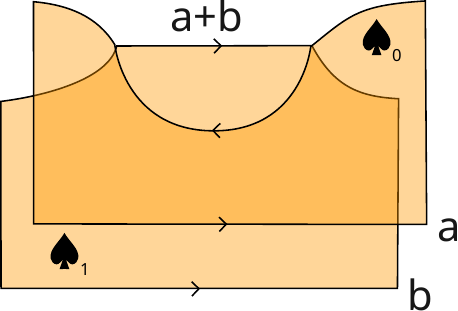}
\]

\[
\mathbf{f} \cdot
\includegraphics[height=2cm,valign=c]{zip.pdf}
=
t_1\;
\includegraphics[height=2cm,valign=c]{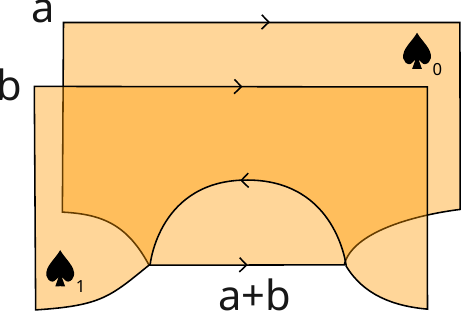}+
t_2\;
\includegraphics[height=2cm,valign=c]{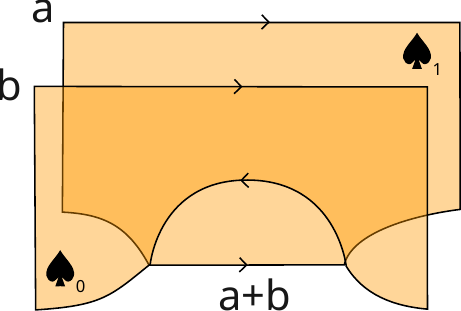}
\]

\[
\mathbf{f} \cdot
\includegraphics[height=2cm,valign=c]{basic-cup.pdf}
=
-\frac{1}{2}\;
\includegraphics[height=2cm,valign=c]{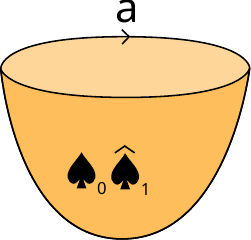}
-\frac{1}{2}\;
\includegraphics[height=2cm,valign=c]{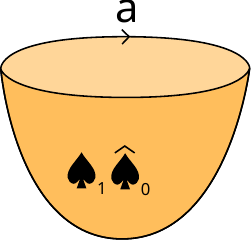}
\]

\[
\mathbf{f} \cdot
\includegraphics[height=2cm,valign=c]{cap.pdf}
=
-\frac{1}{2}\;
\includegraphics[height=2cm,valign=c]{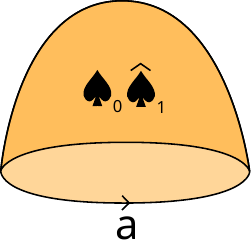}
-\frac{1}{2}\;
\includegraphics[height=2cm,valign=c]{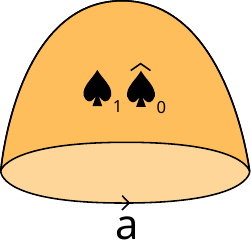}
\]

\[
\mathbf{f} \cdot
\includegraphics[height=2cm,valign=c]{saddle.pdf}
=
\frac{1}{2}\;
\includegraphics[height=2cm,valign=c]{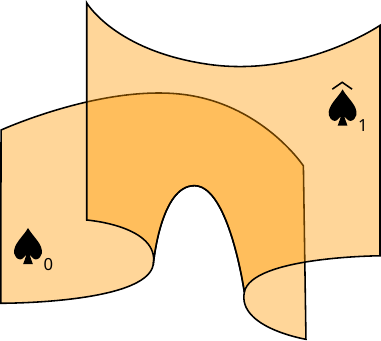}+
\frac{1}{2}\;
\includegraphics[height=2cm,valign=c]{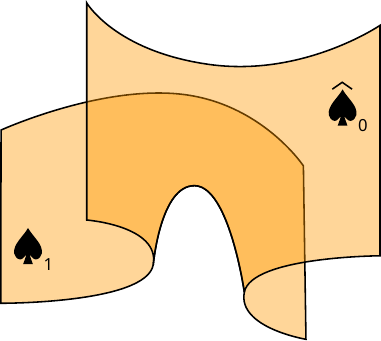}
\]
\end{definition}

\begin{definition}[Action on any foam]
\label{action-all}
The action of \(\mathfrak{sl}_2\) is extended to arbitrary foams so that it acts by derivations with respect to gluings of foams.
More precisely:
\[g \cdot (G \circ H) = (g \cdot G) \circ H + G \circ (g \cdot H)\]
where \(g \in \mathfrak{sl}_2\), \(G\) and \(H\) are foams, and \(\circ\) denotes a gluing of foams along a common boundary component.
Since every foam is isotopic to a collection of basic foams glued along components of their boundaries, this defines an action on all foams.
\end{definition}

% \begin{remark}
% In Section \ref{category-foam} a composition was defined between foams with a common web boundary at the \(0\) or \(1\) plane of a foam embedded in \(\R^2 \times [0,1]\). The Lie algebra \(\mathfrak{sl}_2\) acts by derivations not only with respect to this composition, but also to arbitrary gluings along common boundaries which are not the intersections with the \(0\) or \(1\) plane.
% \end{remark}

\begin{proposition}[{\cite[Lemma 3.9, Proposition 3.10]{qrsw-homology}}]
This action of \(\mathfrak{sl}_2\) on foams is well-defined, invariant under isotopy, and for any closed foam \(F\)
\[
\langle g \cdot F \rangle = g \cdot \langle F \rangle.
\]
The action on the right side of the equation is the action on \(R_N\) from Definition \ref{action-poly}.
\end{proposition}

\subsection{Twisted Actions}
\label{twists}

In Section \ref{link-homology}, chain complexes of webs and foams will define the \(\mathfrak{gl}_N\)-homology of links.
In order for the homology of the complex to be a representation of \(\mathfrak{sl}_2\), we require the differential to induce \(\mathfrak{sl}_2\)-equivariant maps between state spaces.
Unfortunately, the standard differentials used to define \(\mathfrak{gl}_N\) do not induce equivariant maps.
In this section, we define the machinery used to adjust the action on state spaces to make the differentials equivariant.

\begin{definition}
\label{sl2-category}
An \textit{\(\mathfrak{sl}_2\)-category} \(\mathcal{C}\) is a category such that for any pair of objects \(A,B\), the set of morphisms, \(\Hom(A,B)\), is a representation of \(\mathfrak{sl}_2\) and for any
\(g \in \mathfrak{sl}_2\), \(G \in \Hom(A,B)\), and \(H \in \Hom(C, A)\)
\[g \cdot (G\circ H) = (g \cdot G) \circ H + G \circ (g \cdot H).\]
\end{definition}

\begin{corollary}
The \(\mathfrak{sl}_2\) action on foams defines an \(\mathfrak{sl}_2\)-category structure on the category \(\mathrm{Foam}_N\).
\end{corollary}

\begin{lemma}
\label{eq-condition}
For any pair of webs \(A, B\), a foam \(F \in \mathrm{Hom}(A,B)\) induces an \(\mathfrak{sl}_2\)-equivariant map on state spaces if and only if \(\mathfrak{sl}_2\) acts trivially on \(F\).
\end{lemma}
\begin{proof}
The foam \(F\) induces an equivariant map if 
\[
g \cdot Fv = (g \cdot F)v + F(g \cdot v) = F(g \cdot v)
\]
for all \(v \in \Hom(\varnothing, A)\) and \(g \in \mathfrak{sl}_2\).
This only happens if \(g \cdot F = 0\).
\end{proof}

Since \(\mathfrak{sl}_2\) acts non-trivially on basic foams, this condition is hard to come by naturally.
To remedy this, the action on state spaces can be ``twisted''.

\begin{definition}
\label{def-twist}
A \textit{twist} on an object \(A\) in an \(\mathfrak{sl}_2\) category \(\mathcal{C}\) is a \(\mathbf{k}\)-linear map \(\tau_{(-)}: \mathfrak{sl}_2 \to \mathrm{End}(A)\) such that for all \(g, h \in \mathfrak{sl}_2\)
\begin{equation}
\tau_{[g,h]} = [\tau_g, \tau_h] + g \cdot \tau_h - h \cdot \tau_g,
\end{equation}
where \([\tau_g,\tau_h]\) denotes the commutator of endomorphisms.
\end{definition}

\begin{definition}
\label{def-twisted-category}
For any \(\mathfrak{sl}_2\)-category \(\mathcal{C}\), there is an \(\slt\)-category \(Tw(\mathcal{C})\) of \textit{twisted} \(\mathcal{C}\) \textit{objects}.
Its objects are pairs \((A,\tau)\) of objects in \(\mathcal{C}\) and twists of those objects.
The sets of morphisms have the same underlying \(\mathbf{k}\)-modules
\[
\Hom_{Tw(\mathcal{C})}((A,\tau), (B, \sigma)) := \Hom_{\mathcal{C}}(A,B),
\]
with a new \(\mathfrak{sl}_2\) action defined by
\begin{equation}
\label{signs}
g * f := g \cdot f + \sigma_{g} \circ f - f \circ \tau_g,
\end{equation}
where \(g \cdot f\) is the action in \(\mathcal{C}\).
\end{definition}

\subsection{Green Dots}
\label{green-dots}

Suppose \(e\) is an edge in a web \(A\). For any decoration \(p \in R_{\ell(e)} \otimes R_{N - \ell(e)}\), there is an endomorphism \(p^{(e)}\) of \(A\) defined by the identify foam for \(A\) with the decoration \(p\) on the facet attached to edge \(e\). 

\begin{definition}
\label{green-twist}
The \textit{hollow twist} on the edge \(e\) is defined on generators of \(\mathfrak{sl}_2\) by
\begin{equation}
\begin{aligned}
\mathbf{e} &\mapsto 0\\
\mathbf{h} &\mapsto -\spadesuit^{(e)}_0 \\
\mathbf{f} &\mapsto \spadesuit^{(e)}_1.
\end{aligned}
\end{equation}
The \textit{solid twist} on the edge \(e\) is defined on generators of \(\mathfrak{sl}_2\) by
\begin{equation}
\begin{aligned}
\mathbf{e} &\mapsto 0\\
\mathbf{h} &\mapsto -\hat{\spadesuit}^{(e)}_0 \\
\mathbf{f} &\mapsto \hat{\spadesuit}^{(e)}_1.
\end{aligned}
\end{equation}
\end{definition}

These twist endomorphisms are local to an edge \(e\),
so they can be represented diagrammatically with hollow and solid green dots on the edge \(e\) of a web.
Edges decorated with multiple green dots labelled with scalars are interpreted as linear combinations.

The following are local pictures of the twisted \(\slt\)-action on a foam
between two webs with green dot twists.
Note that new action defined in \eqref{signs} imposes alternating signs depending on if the twist is on the source or target web. In a state space, where all foams map into a given web, only the first column applies.
\[
\mathbf{h} \cdot \includegraphics[height=1.5cm,valign=c]{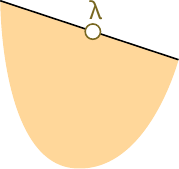}
=
\mathbf{h} \cdot \includegraphics[height=1.5cm,valign=c]{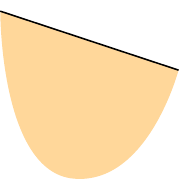}
- \lambda
\includegraphics[height=1.5cm,valign=c]{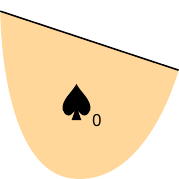},
\quad\quad
\mathbf{h} \cdot \includegraphics[height=1.5cm,valign=c]{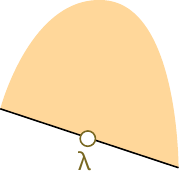}
=
\mathbf{h} \cdot \includegraphics[height=1.5cm,valign=c]{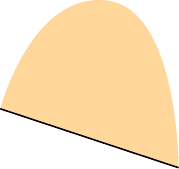}
+ \lambda
\includegraphics[height=1.5cm,valign=c]{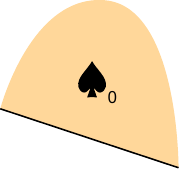}
\]

\[
\mathbf{h} \cdot \includegraphics[height=1.5cm,valign=c]{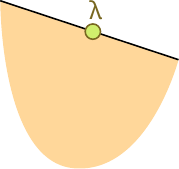}
=
\mathbf{h} \cdot \includegraphics[height=1.5cm,valign=c]{open-top.pdf}
- \lambda
\includegraphics[height=1.5cm,valign=c]{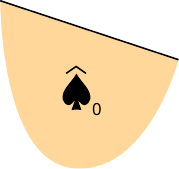},
\quad\quad
\mathbf{h} \cdot \includegraphics[height=1.5cm,valign=c]{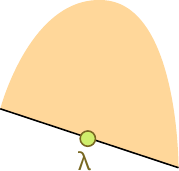}
=
\mathbf{h} \cdot \includegraphics[height=1.5cm,valign=c]{open-bottom.pdf}
+ \lambda
\includegraphics[height=1.5cm,valign=c]{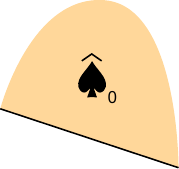}
\]

\[
\mathbf{f} \cdot \includegraphics[height=1.5cm,valign=c]{open-green-top.pdf}
=
\mathbf{f} \cdot \includegraphics[height=1.5cm,valign=c]{open-top.pdf}
+ \lambda
\includegraphics[height=1.5cm,valign=c]{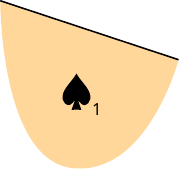},
\quad\quad
\mathbf{f} \cdot \includegraphics[height=1.5cm,valign=c]{open-bottom-green.pdf}
=
\mathbf{f} \cdot \includegraphics[height=1.5cm,valign=c]{open-bottom.pdf}
- \lambda
\includegraphics[height=1.5cm,valign=c]{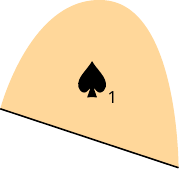}
\]

\[
\mathbf{f} \cdot \includegraphics[height=1.5cm,valign=c]{open-green-top-filled.pdf}
=
\mathbf{f} \cdot \includegraphics[height=1.5cm,valign=c]{open-top.pdf}
+ \lambda
\includegraphics[height=1.5cm,valign=c]{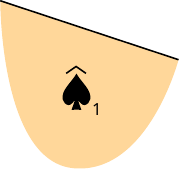},
\quad\quad
\mathbf{f} \cdot \includegraphics[height=1.5cm,valign=c]{open-bottom-green-filled.pdf}
=
\mathbf{f} \cdot \includegraphics[height=1.5cm,valign=c]{open-bottom.pdf}
- \lambda
\includegraphics[height=1.5cm,valign=c]{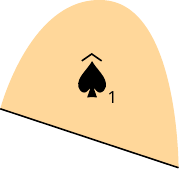}
\]

\section{construction of \(\slt\)-equivariant link homology}
\label{link-homology}

In this section, we recall the construction of an \(\slt\)-equivariant link homology originally defined in \cite{qrsw-homology}.

In diagrams of webs and foams,
facets and edges of thickness 1 are colored blue and facets and edges of thickness 2 are be colored red.
On blue facets, \(n\) dots represent the decoration \(x^n \in R_1 = \mathbf{k}[x]\).

\subsection{Braiding Complexes}
Let the diagrams \(A\) and \(B\) denote the positive and negative crossings respectively:
\[
A = \includegraphics[height=1.5cm, valign=c]{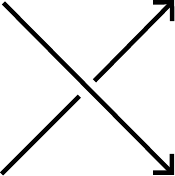},
\quad\quad
B = \includegraphics[height=1.5cm, valign=c]{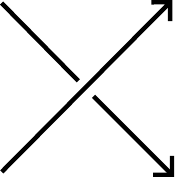}.
\]

Recall that \(t_1,t_2 \in \mathbf{k}\) are two fixed parameters that the action depends on.
Link homology is defined by assigning cohomological braiding complexes to the two crossing types:
\[
C^*(A) =
q\includegraphics[height=1cm,valign=c]{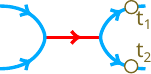}\quad
\xrightarrow{\quad\;\includegraphics[height=1cm,valign=c]{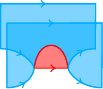}\quad\;}\quad
\uwave{\includegraphics[height=1cm,valign=c]{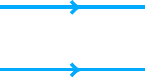}},
\]
\[
C^*(B) =
\uwave{\includegraphics[height=1cm,valign=c]{parallel.pdf}}\quad
\xrightarrow{\quad\;\includegraphics[height=1cm,valign=c]{unzip.pdf}\quad\;}\quad
q^{-1} \includegraphics[height=1cm,valign=c]{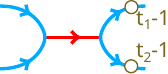}.
\]
In both cases the underlined term is in homological degree 0.
The hollow dot twists ensure that the differentials induce \(\mathfrak{sl}_2\)-equivariant maps.

\begin{definition}
For an arbitrary link \(L\), define \(C^*(L)\) to be total complex of the hypercube of resolutions constructed by taking the tensor product of braiding complexes corresponding to its component crossings.
The homology of \(\mathcal{F}_N(C^*(L))\) is the homology \(H^*(L)\) of the link.
\end{definition}

\subsection{The Relative Homotopy Category}
The action of \(\mathfrak{sl}_2\) on \(R_N\) makes \(R_N\) a \(\mathcal{U}(\mathfrak{sl}_2)\)-module algebra.
Any \(R_N\)-module with an action of \(\mathfrak{sl}_2\) that is compatible with the action on \(R_N\) is a module over the smash product \(\mathcal{U}(\mathfrak{sl}_2) \# R_N\) and any morphism of \(R_N\)-modules which is also \(\mathfrak{sl}_2\)-equivariant is a morphism of \(\mathcal{U}(\mathfrak{sl}_2) \# R_N\)-modules.

Let \(\text{For: }\mathcal{U}(\mathfrak{sl}_2) \# R_N\text{-}\mathrm{Mod} \to R_N\text{-}\mathrm{Mod}\) be the functor which forgets the \(\mathfrak{sl}_2\) action.
This functor extends to a functor on the corresponding categories of chain complexes and their usual homotopy categories.

\begin{definition}
\label{relative-homotopy}
The \textit{relative homotopy category} is the Verdier quotient
\[
\mathcal{K}^{\mathfrak{sl}_2}(R_N) := \frac{\mathcal{K}(\mathcal{U}(\mathfrak{sl}_2) \# R_N)}{\Ker(\mathrm{For})}.
\]
\end{definition}
More explicitly, an \(\slt\)-equivariant chain complex of \(\slt \# R_N\)-modules is relative null homotopic if it is null homotopic as a chain complex of \(R_N\)-modules.
In practice, this means chain homotopy equivalences can be constructed in the relative homotopy category such that one direction is \(\mathfrak{sl}_2\)-equivariant, while the other direction is not.
The weaker, relative condition is enough to guarantee that homology groups are isomorphic as \(\mathfrak{sl}_2\)-representations.

\begin{theorem}[{\cite[Theorem 4.3]{qrsw-homology}}]
As an \(\slt\)-representation, \(H^*(L)\) is an isotopy invariant of framed links in the relative homotopy category.
\end{theorem}

\begin{remark}
As an \(\slt\)-representation, the homology is only a \textit{framed} invariant of links because the Reidemeister I moves come with quantum grading shifts and twists:
\begin{equation}
\includegraphics[valign=c, height=0.75cm]{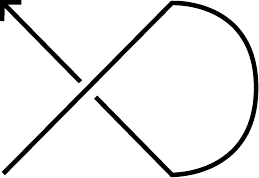}
\cong
q^{-1}
\includegraphics[valign=c, height=0.75cm]{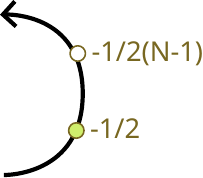},
\quad\quad\quad
\includegraphics[valign=c, height=0.75cm]{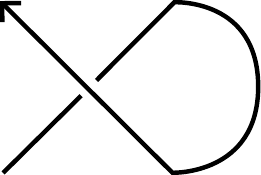}
\cong
q
\includegraphics[valign=c, height=0.75cm]{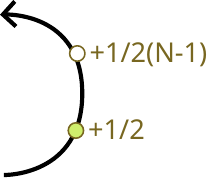}.
\end{equation}
Note that the sign of the twist is opposite to the sign of the crossing.
\end{remark}

\section{The \(\slt\)-equivariant complex \(C^*(T_{2,k})\)}
\label{homology-computation}
In this section, \(N=2\).

\subsection{Local Relations}
Recall a few local relations on foams following \cite{beliakova}:

Dot Reduction by Symmetric Coefficients:
\[
\includegraphics[valign=c, height=0.5cm]{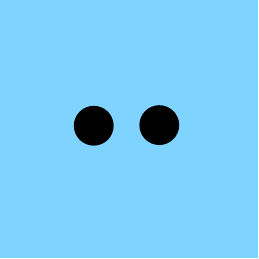}
=
E_1\;
\includegraphics[valign=c, height=0.5cm]{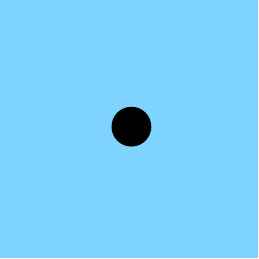}
-
E_2\;
\includegraphics[valign=c, height=0.5cm]{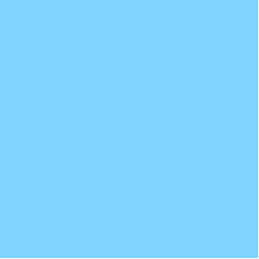}
\]

Dot Migration:
\[
\includegraphics[valign=c, height=1cm]{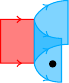}
=
E_1\;
\includegraphics[valign=c, height=1cm]{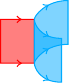}
-
\includegraphics[valign=c, height=1cm]{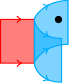}
\]

Neck Cutting:
\[
\includegraphics[valign=c, height=1cm]{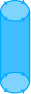}
=
\includegraphics[valign=c, height=1cm]{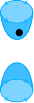}
+
\includegraphics[valign=c, height=1cm]{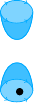}
-
E_1\;
\includegraphics[valign=c, height=1cm]{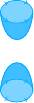},
\quad\quad\quad
\includegraphics[valign=c, height=1cm]{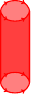} =
-\includegraphics[valign=c, height=1cm]{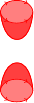}
\]

Spheres:
\[
\includegraphics[valign=c,height=0.5cm]{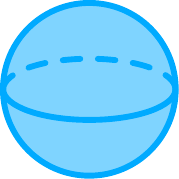} = 0
\quad
\includegraphics[valign=c,height=0.5cm]{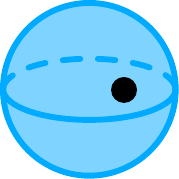} = 1
\quad
\includegraphics[valign=c,height=0.5cm]{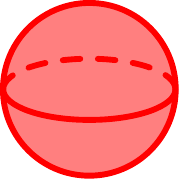} = -1
\]

Detachments:
\[
\includegraphics[valign=c,height=1cm]{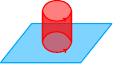} =
-\includegraphics[valign=c,height=1cm]{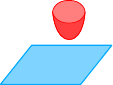},
\quad\quad\quad
\includegraphics[valign=c,height=1cm]{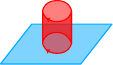} =
\includegraphics[valign=c,height=1cm]{red-neck-detached.pdf}
\]
\[
\includegraphics[valign=c,height=1cm]{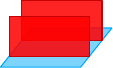} =
-\includegraphics[valign=c,height=1cm]{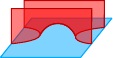},
\quad\quad\quad
\includegraphics[valign=c,height=1cm]{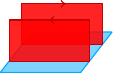} =
\includegraphics[valign=c,height=1cm]{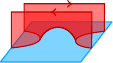}
\]

\subsection{Special Webs and Foams}
The following are distinguished green-dotted webs:
\begin{gather*}
H^n := \includegraphics[valign=c]{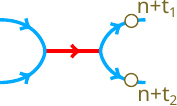},\\
\Phi^n := \includegraphics[valign=c]{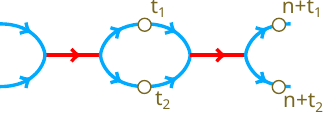},\\
I := \includegraphics[valign=c]{parallel.pdf}.
\end{gather*}

The following are distinguished foams between the above webs:

\[\epsilon := \includegraphics[valign=c]{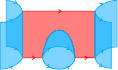} \quad
\epsilon^{\bullet}_C := \includegraphics[valign=c]{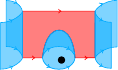} \quad
\epsilon^{\bullet}_R := \includegraphics[valign=c]{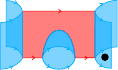}\quad
\iota := \includegraphics[valign=c]{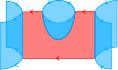}\]

\[m_L := \includegraphics[valign=c]{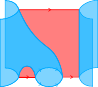}\quad
m_R := \includegraphics[valign=c]{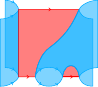}\]

\[
h := \includegraphics[valign=c]{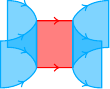}\quad
h^{\bullet}_L := \includegraphics[valign=c]{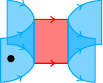}\quad
h^{\bullet}_R := \includegraphics[valign=c]{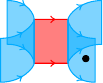}\]

\[\phi^{\bullet}_R := \includegraphics[valign=c]{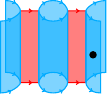}\quad
\phi^{\bullet}_C := \includegraphics[valign=c]{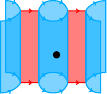}\]

\[z := \includegraphics[valign=c]{zip-bottom.pdf}\quad
u := \includegraphics[valign=c]{unzip.pdf}.\]

\subsection{The complex \(C^*(A_k)\)}
The homology of \(T_{2,k}\) is determined by computing the complex of a link which contains a sequence of positive crossings.
Let \(A_k\) denote the sequence of \(k\) positive crossings.

\begin{proposition}
\label{twist-complex}
In the relative homotopy category, the complex \(C^*(A_k)\) is isomorphic to
\begin{equation}
	0 \xrightarrow{} q^{2k-1} H^{k-1} \xrightarrow{d^{-k}} q^{2k-3} {H^{k-2}} \xrightarrow{d^{-k+1}} \cdots \xrightarrow{d^{-3}} q^{3} {H^{1}} \xrightarrow{d^{-2}} q {H^{0}} \xrightarrow{d^{-1}} \uwave{I} \xrightarrow{} 0
\end{equation}
where \(d^{-1} := z\), \(d^{-j} := h^{\bullet}_R - h^{\bullet}_L\) when \(j\) is even and \(d^{-j} := h^{\bullet}_R + h^{\bullet}_L - E_1 h\) when \(j\neq 1\) is odd.
The underlined term is in homological degree 0.
\end{proposition}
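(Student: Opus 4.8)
The plan is to induct on $k$. For the base case $k = 2$, I would invoke Lemma \ref{basecase} directly: its middle row is $\mathcal{F}(C(D_2)) = C(A_2)$, and since the displayed diagram is $\mathfrak{sl}_2$-equivariant and commutative with exact columns and acyclic bottom row, the cancellation principle in the relative homotopy category identifies $C(A_2)$ with the top row $qH^{1} \xrightarrow{h^{\bullet}_R - h^{\bullet}_L} q^{-1}H^{0} \xrightarrow{z} q^{-2}I$. This is exactly the asserted complex for $k = 2$: here $d^{1} = z$ (the case $j = 1$) and $d^{0} = h^{\bullet}_R - h^{\bullet}_L$ (the case $j = k = 2$, even).

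For the inductive step I would write $C(A_k) = C(A)\otimes C(A_{k-1})$, adjoining the last crossing, and substitute the reduced complex for $C(A_{k-1})$ supplied by the inductive hypothesis. Then $C(A_k)$ is the total complex of a two-row bicomplex: the row in which the new crossing is resolved as $I$ reproduces $C(A_{k-1})$ up to a $q^{-1}$ shift and a cohomological shift, while the row in which it is resolved as the wide edge post-composes each fork web $H^{n}$ of $C(A_{k-1})$ with a wide edge, turning it into a bubbled web $\Phi^{n}$ and turning each differential into its extension by the identity on the new facet. So, before further reduction, $C(A_k)$ is a string of bubbled webs $\Phi^{k-2} \to \cdots \to \Phi^{0}$ capped by $H^{0}$ and $I$, sitting over the reduced $C(A_{k-1})$ via the cone maps. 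Lemma \ref{finalsquare} is exactly the move replacing a three-term window ``$\Phi^{m} \to H^{m}\oplus H^{m} \to H^{m-1}$'' by ``$H^{m+1} \to H^{m} \to H^{m-1}$'', and Lemma \ref{basecase} is the corresponding move for the bottom window ending in $I$. I would apply these starting from the bottom of the complex and cascading upward, each reduction producing one of the two copies of $H^{m}$ that feed the next, collapsing the whole string of bubbled webs into $q^{k-1}H^{k-1} \to q^{k-3}H^{k-2} \to \cdots \to q^{-k+1}H^{0} \xrightarrow{z} q^{-k}I$. The differentials then read off as stated: $h^{\bullet}_R - h^{\bullet}_L$ in the even positions and, because the commutativity in Lemma \ref{finalsquare} rests on a neck-cutting identity of the form $m_R = \epsilon^{\bullet}_C + \epsilon^{\bullet}_R - E_1\epsilon$ (as already seen in the proof of Lemma \ref{basecase}), $h^{\bullet}_R + h^{\bullet}_L + E_1 h$ in the odd positions $j \neq 1$, the sign alternation being forced by the $(-1)^{k}$ in the vertical and horizontal maps of Lemma \ref{finalsquare}.

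The conceptual content is carried entirely by the two lemmas and the cancellation principle, so the hard part will be bookkeeping. I expect the main obstacle to be checking that each three-term window met in the cascade matches the middle row of Lemma \ref{finalsquare} exactly — the right $q$-degree shifts, the right green-dot multiplicities on the bubbled and fork webs, and the right identification of the composite foams (built from the zip $z$, the digon cup and cap, the merge foams $m_L, m_R$, and identity foams) with the maps named there, including the twisted map $\epsilon\tilde{d}^{0}$ — and that the $(-1)^{k}$ signs and the $E_1 h$ corrections propagate consistently through the successive applications so that the final differentials are precisely $d^{k-j} = h^{\bullet}_R - h^{\bullet}_L$ for $j$ even and $d^{k-j} = h^{\bullet}_R + h^{\bullet}_L + E_1 h$ for odd $j \neq 1$. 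That sign analysis is the only genuinely delicate point.
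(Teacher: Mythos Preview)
Your overall plan---induct on $k$, use Lemma~\ref{basecase} for the base case, and use Lemma~\ref{finalsquare} in the inductive step---matches the paper's, but your inductive step takes a more laborious route than necessary. You propose to cascade lemma applications window by window from the rightmost end of the two-row bicomplex, requiring roughly $k$ separate reductions and careful tracking of induced maps and sign propagation at each stage. The paper instead makes a single observation that collapses almost all of this work: once the reduced $C(A_k)$ is substituted and the two-row bicomplex for $C(A_{k+1})$ is written out, observe that the reduced $C(A_k)$ with its leftmost term $q^{k-1}H^{k-1}$ deleted is exactly $q^{-1}$ times the reduced $C(A_{k-1})$; hence the portion of the bicomplex obtained by deleting its leftmost column is $C(A)\otimes q^{-1}C(A_{k-1}) = q^{-1}C(A_k)$, and the induction hypothesis reduces that entire portion in one stroke. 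What remains is the single leftmost square, which is precisely the middle row of Lemma~\ref{finalsquare}, so only one application of that lemma is needed. Your cascade would eventually arrive at the same place---your remark that each reduction supplies one of the two $H^m$'s for the next window via the quotient map $\epsilon$, making the induced horizontal map $\epsilon\tilde d^{0}$, is exactly right---but recognizing the subcomplex $q^{-1}C(A_k)$ lets the induction hypothesis absorb all that bookkeeping at once and removes the delicate sign analysis you flagged as the main obstacle.
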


The proof follows from two lemmas.

\begin{lemma}
\label{basecase}
The following diagram is commutative, has exact columns, and all morphisms are \(\mathfrak{sl}_2\)-equivariant:
% https://q.uiver.app/?q=WzAsOSxbMCwyLCJcXFBoaV97dF8yfV57dF8xfSJdLFsyLDIsIkhfe3RfMn1ee3RfMX1cXG9wbHVzIEhfe3RfMn1ee3RfMX0iXSxbNCwyLCJJIl0sWzAsNCwiSF97dF8yfV57dF8xfSJdLFsyLDQsIkhee3RfMX1fe3RfMn0iXSxbNCw0LCJcXHZhcm5vdGhpbmciXSxbMCwwLCJIX3sxK3RfMn1eezErdF8yfSJdLFsyLDAsIkhfe3RfMn1ee3RfMX0iXSxbNCwwLCJJIl0sWzMsNCwiaCJdLFs0LDVdLFs0LDEsIlxcYmVnaW57Ym1hdHJpeH1oXFxcXGhcXGVuZHtibWF0cml4fSJdLFszLDAsIlxcaW90YSIsMl0sWzAsNiwiXFxlcHNpbG9uIiwyXSxbMCwxLCJcXGJlZ2lue2JtYXRyaXh9bV9MXFxcXG1fUlxcZW5ke2JtYXRyaXh9Il0sWzEsNywiXFxiZWdpbntibWF0cml4fWgmLWhcXGVuZHtibWF0cml4fSIsMl0sWzEsMiwiXFxiZWdpbntibWF0cml4fXomLXpcXGVuZHtibWF0cml4fSJdLFs1LDJdLFs2LDcsImheKl9MIC0gaF4qX1IiXSxbNyw4LCJ6Il0sWzIsOCwiaWQiXV0=
\begin{equation}
\label{basecase-diagram}
\begin{tikzcd}[ampersand replacement=\&]
	{q^3H^{1}} \&\& q^{1}{H^{0}} \&\& I \\
	\\
	q^2{\Phi^{0}} \&\& {qH^{0}\oplus qH^{0}} \&\& I \\
	\\
	q{H^{0}} \&\& q{H^{0}} \&\& 0
	\arrow["h", from=5-1, to=5-3]
	\arrow[from=5-3, to=5-5]
	\arrow["{\begin{bmatrix}h\\h\end{bmatrix}}", from=5-3, to=3-3]
	\arrow["\iota"', from=5-1, to=3-1]
	\arrow["\epsilon"', from=3-1, to=1-1]
	\arrow["{\begin{bmatrix}m_R\\m_L\end{bmatrix}}", from=3-1, to=3-3]
	\arrow["{\begin{bmatrix}h&-h\end{bmatrix}}"', from=3-3, to=1-3]
	\arrow["{\begin{bmatrix}z&-z\end{bmatrix}}", from=3-3, to=3-5]
	\arrow[from=5-5, to=3-5]
	\arrow["{h^{\bullet}_R - h^{\bullet}_L}", from=1-1, to=1-3]
	\arrow["z", from=1-3, to=1-5]
	\arrow["id", from=3-5, to=1-5]
\end{tikzcd}.
\end{equation}
\end{lemma}

\begin{proof}
The foams \(m_R, m_L\) and \(z\) are all included in braiding complexes and are thus known to be equivariant.
The foam \(h\) is simply the identity foam and is trivially equivariant.
To show that \(\iota\), \(\epsilon\), and \(h_R^{\bullet} - h_L^{\bullet}\) are equivariant,
it is sufficient to show that the generators \(\mathbf{h}\) and \(\mathbf{f}\) act trivially on these foams.
The computation for the action of \(\mathbf{f}\) on \(\iota\) and \(\epsilon\) is shown below.
The other computations are similar.
Green dots which are common to the source and target are omitted to simplify diagrams.

% \begin{align*}
% \mathbf{h} \cdot \includegraphics[valign=c]{green-unit.pdf} =
% (t_1 + t_2)&\includegraphics[valign=c]{unit.pdf} \\
% - t_1&\includegraphics[valign=c]{unit.pdf} - t_2 \includegraphics[valign=c]{unit.pdf} = 0\\
% \mathbf{f} \cdot
% \includegraphics[valign=c]{green-unit.pdf} =
% -t_1 &\includegraphics[valign=c]{back-unit.pdf} 
% -t_2 \includegraphics[valign=c]{front-unit.pdf} \\
% +t_1 &\includegraphics[valign=c]{back-unit.pdf} 
% +t_2 \includegraphics[valign=c]{front-unit.pdf} 
% =0
% \end{align*}

\begin{align*}
% \mathbf{h} \cdot \includegraphics[valign=c]{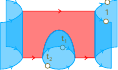} = 
% (\overline{t_1} + \overline{t_2})&\includegraphics[valign=c]{counit.pdf} \\
% +t_1&\includegraphics[valign=c]{counit.pdf} + t_1 \includegraphics[valign=c]{counit.pdf} - 1\includegraphics[valign=c]{counit.pdf} -1\includegraphics[valign=c]{counit.pdf} = 0\\
\mathbf{f} \cdot \includegraphics[valign=c]{green-counit.pdf} = 
-\overline{t_1} \; &\includegraphics[valign=c]{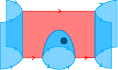}
-\overline{t_2} \; \includegraphics[valign=c]{counit-center-x.pdf} \\
           -t_1 \; &\includegraphics[valign=c]{counit-back-center-x.pdf}
           -t_2 \; \includegraphics[valign=c]{counit-center-x.pdf}
               +\; \includegraphics[valign=c]{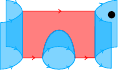}
               +\; \includegraphics[valign=c]{counit-right-x.pdf}\\
= - \; &\includegraphics[valign=c]{counit-back-center-x.pdf}
- \; \includegraphics[valign=c]{counit-center-x.pdf} \\
               +\; &\includegraphics[valign=c]{counit-back-right-x.pdf}
               +\; \includegraphics[valign=c]{counit-right-x.pdf}\\
= \; &\includegraphics[valign=c]{counit-center-x.pdf}
- E_1\; \includegraphics[valign=c]{counit.pdf}
- \; \includegraphics[valign=c]{counit-center-x.pdf} \\
-\; &\includegraphics[valign=c]{counit-right-x.pdf}
+ E_1\; \includegraphics[valign=c]{counit.pdf}
               +\; \includegraphics[valign=c]{counit-right-x.pdf}\\
	       =0.
\end{align*}

\begin{align*}
% \mathbf{h} \cdot
% \left(
% \includegraphics[valign=c]{green-right-x.pdf} -
% \includegraphics[valign=c]{green-left-x.pdf}
% \right)
% = -2
% &\left(
% \includegraphics[valign=c]{right-x.pdf} -
% \includegraphics[valign=c]{left-x.pdf}
% \right)\\
% +
% &\left(
% \includegraphics[valign=c]{right-x.pdf} -
% \includegraphics[valign=c]{left-x.pdf}
% \right)
% +
% \left(
% \includegraphics[valign=c]{right-x.pdf} -
% \includegraphics[valign=c]{left-x.pdf}
% \right)\\
% &=0\\
\mathbf{f} \cdot \left(
\includegraphics[valign=c]{right-x.pdf} -
\includegraphics[valign=c]{left-x.pdf}
\right) = \;
E_1\; &\includegraphics[valign=c]{right-x.pdf} -
E_1\; \includegraphics[valign=c]{left-x.pdf}\\
-&\includegraphics[valign=c]{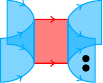}
-\includegraphics[valign=c]{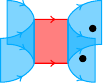}
+\; \includegraphics[valign=c]{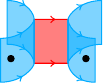}
+\includegraphics[valign=c]{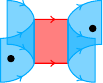}\\
=
E_1\; &\includegraphics[valign=c]{right-x.pdf}
- E_1\; \includegraphics[valign=c]{left-x.pdf}\\
+E_1\; &\includegraphics[valign=c]{left-x.pdf}
-E_1\;\includegraphics[valign=c]{right-x.pdf}\\
=0.
\end{align*}

The bottom left square commutes since \(m_R \circ \iota\), \(m_L \circ \iota\) and \(h\) are isotopic:
\[
m_R \circ \iota = \includegraphics[valign=c]{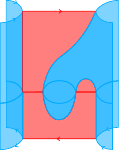}.
\]
The top right square commutes since \(h\) is the identity foam for \(H\).
The top square commutes because of the neck cutting relation:
\[
\includegraphics[valign=c]{right-m.pdf} = 
\includegraphics[valign=c]{counit-center-x.pdf}
+\includegraphics[valign=c]{counit-right-x.pdf}
-E_1 \includegraphics[valign=c]{counit.pdf}.
\]

The composition \(\epsilon \circ \iota\) forms a blue sphere surrounded by a red facet, which can be separated into a free floating blue sphere using the detachment relations.
Since the undotted blue sphere is equal to \(0\), the kernel of \(\epsilon\) consists of the elements of the state space of \(\Phi^0\) which have no dots on the inner circle.
This is exactly image of \(\iota\), so the left column is exact.
Since the \(h\) foams are identities for \(H\), the middle column is trivially exact.
\end{proof}

\begin{corollary}
The middle row of \eqref{basecase-diagram} is \(C^*(D_2)\) and the bottom row is acyclic.
Therefore \(C^*(D_2)\) is isomorphic to the top row in the relative homotopy category.
\end{corollary}

\begin{lemma}
\label{finalsquare}
The following diagram is \(\mathfrak{sl}_2\)-equivariant, commutative, and has exact columns:
% https://q.uiver.app/?q=WzAsOSxbMCwyLCJcXFBoaV97ay0xK3RfMn1ee2stMSt0XzF9Il0sWzIsMiwiSF97ay0xK3RfMn1ee2stMSt0XzF9XFxvcGx1cyBIX3trLTErdF8yfV57ay0xK3RfMX0iXSxbNCwyLCJIXntrLTIrdF8xfV97ay0yK3RfMn0iXSxbMCw0LCJIX3trLTErdF8yfV57ay0xK3RfMX0iXSxbMiw0LCJIXntrLTErdF8xfV97ay0xK3RfMn0iXSxbNCw0LCJcXHZhcm5vdGhpbmciXSxbMCwwLCJIX3trK3RfMn1ee2srdF8yfSJdLFsyLDAsIkhfe2stMSt0XzJ9XntrLTErdF8xfSJdLFs0LDAsIkhee2stMit0XzF9X3trLTIrdF8yfSJdLFszLDQsImgiXSxbNCw1XSxbNCwxLCJcXGJlZ2lue2JtYXRyaXh9aFxcXFxoXFxlbmR7Ym1hdHJpeH0iXSxbMywwLCJcXGlvdGEiLDJdLFswLDYsIlxcZXBzaWxvbiIsMl0sWzAsMSwiXFxiZWdpbntibWF0cml4fW1fTFxcXFxkXjBcXGVuZHtibWF0cml4fSJdLFsxLDcsIlxcYmVnaW57Ym1hdHJpeH1oJi1oXFxlbmR7Ym1hdHJpeH0iLDJdLFsxLDIsIlxcYmVnaW57Ym1hdHJpeH1kXjAmLWReMFxcZW5ke2JtYXRyaXh9Il0sWzUsMl0sWzYsNywiZF57LTF9Il0sWzcsOCwiZF4wIl0sWzIsOCwiaWQiXV0=
\[\begin{tikzcd}[ampersand replacement=\&]
	q^{2k+1}{H^{k}} \&\& q^{2k-1} {H^{k-1}} \&\&\& q^{2k-3} {H^{k-2}} \\
	\\
	\\
	q^{2k}{\Phi^{k-1}} \&\& q^{2k-1} {H^{k-1}\oplus q^{2k-1} H^{k-1}} \&\&\& q^{2k-3} {H^{k-2}} \\
	\\
	\\
	q^{2k-1} {H^{k-1}} \&\& q^{2k-1} {H^{k-1}} \&\&\& \varnothing
	\arrow["h", from=7-1, to=7-3]
	\arrow[from=7-3, to=7-6]
	\arrow["{\begin{bmatrix}(-1)^kh\\h\end{bmatrix}}", from=7-3, to=4-3]
	\arrow["\iota"', from=7-1, to=4-1]
	\arrow["\epsilon"', from=4-1, to=1-1]
	\arrow["{\begin{bmatrix}\epsilon\tilde{d}^{-k}\\m_L\end{bmatrix}}", from=4-1, to=4-3]
	\arrow["{\begin{bmatrix}h&(-1)^{k-1}h\end{bmatrix}}"', from=4-3, to=1-3]
	\arrow["{\begin{bmatrix}d^{-k}&(-1)^{k-1}d^{-k}\end{bmatrix}}", from=4-3, to=4-6]
	\arrow[from=7-6, to=4-6]
	\arrow["{d^{-k-1}}", from=1-1, to=1-3]
	\arrow["{d^{-k}}", from=1-3, to=1-6]
	\arrow["h", from=4-6, to=1-6]
\end{tikzcd}.\]

\end{lemma}

\begin{proof}
The bottom left square commutes by the bubble and sphere relations and the top left square commutes by the neck cutting relation.
The rest is similar to the preceding lemma.
\end{proof}

\begin{proof}[Proof of \ref{twist-complex}]
The proof follows by induction.
The base case \(k=2\) is given by Lemma \ref{basecase}.

Suppose that the complex \(C^*(A_k)\) is relative homotopic to Proposition \ref{twist-complex}.
Then the complex \(C^*(A_{k+1})\) is relative homotopy to
% https://q.uiver.app/?q=WzAsMTYsWzcsMCwiMCJdLFs2LDAsIkkiXSxbNSwwLCJIX3t0XzJ9Xnt0XzF9Il0sWzQsMCwiSF57MSt0XzF9X3sxK3RfMn0iXSxbMSwwLCJIXntrLTErdF8xfV97ay0xK3RfMn0iXSxbMywwLCJcXGNkb3RzIl0sWzIsMCwiSF57ay0yK3RfMX1fe2stMit0XzJ9Il0sWzAsMCwiMCJdLFswLDEsIjAiXSxbMSwxLCJcXFBoaV57ay0xK3RfMX1fe2stMSt0XzJ9Il0sWzIsMSwiXFxQaGlee2stMit0XzF9X3trLTIrdF8xfSJdLFszLDEsIlxcY2RvdHMiXSxbNCwxLCJcXFBoaV97MSt0XzJ9XnsxK3RfMX0iXSxbNSwxLCJcXFBoaV97dF8yfV57dF8xfSJdLFs2LDEsIkgiXSxbNywxLCIwIl0sWzEsMF0sWzIsMSwiLWRee2stMX0iXSxbMywyLCItZF57ay0yfSJdLFs2LDUsIi1kXjEiXSxbNCw2LCItZF4wIl0sWzcsNF0sWzUsMywiLWRee2stM30iXSxbOSw0LCJtX0wiXSxbOSwxMCwiZF4wIiwyXSxbOCw5XSxbMTAsMTEsImReMSIsMl0sWzEwLDYsIm1fTCJdLFsxMSwxMiwiZF57ay0zfSIsMl0sWzE0LDE1XSxbMTMsMTQsImRee2stMX0iLDJdLFsxMiwxMywiZF57ay0yfSIsMl0sWzEyLDMsIm1fTCJdLFsxMywyLCJtX0wiXSxbMTQsMSwibV9MIl1d
\begin{equation}
\label{above}
\begin{tikzcd}[ampersand replacement=\&]
	0 \& q^{2k-1}{H^{k-1}} \& q^{2k-3}{H^{k-2}} \& \cdots \& q{H^{0}} \& I \& 0 \\
	0 \& q^{2k} {\Phi^{k-1}} \& q^{2k-2} {\Phi^{k-2}} \& \cdots \& q^2 {\Phi^{0}} \& qH^0 \& 0
	\arrow["{-d^{-1}}", from=1-5, to=1-6]
	\arrow["{}", from=1-6, to=1-7]
	\arrow["{-d^{-k+1}}", from=1-3, to=1-4]
	\arrow["{-d^{-k}}", from=1-2, to=1-3]
	\arrow[from=1-1, to=1-2]
	\arrow["{-d^{-2}}", from=1-4, to=1-5]
	\arrow["{m_L}", from=2-2, to=1-2]
	\arrow["{\tilde{d}^{-k}}"', from=2-2, to=2-3]
	\arrow[from=2-1, to=2-2]
	\arrow["{\tilde{d}^{-k+1}}"', from=2-3, to=2-4]
	\arrow["{m_L}", from=2-3, to=1-3]
	\arrow["{\tilde{d}^{-2}}"', from=2-4, to=2-5]
	\arrow["{}"', from=2-6, to=2-7]
	\arrow["{\tilde{d}^{-1}}"', from=2-5, to=2-6]
	\arrow["{m_L}", from=2-5, to=1-5]
	\arrow["{z}", from=2-6, to=1-6]
\end{tikzcd},
\end{equation}
where \(\tilde{d}^{-1} := m_R\), \(\tilde{d}^{-j} := \phi^{\bullet}_R - \phi^{\bullet}_L\) when \(j\) is even and \(\tilde{d}^{-j} := \phi^{\bullet}_R + \phi^{\bullet}_L - E_1 \phi\), when \(j \neq 1\) is odd.

By the induction hypothesis, the part of \(C^*(A_k)\) excluding the leftmost term is the complex \(q^{-1}C^*(A_{k-1})\).
Thus the portion of \eqref{above} excluding the leftmost column must be isomorphic to \(q^{-1} C^*(A_k)\).
Therefore the entire diagram is isomorphic to
% https://q.uiver.app/?q=WzAsMTEsWzcsMCwiMCJdLFs2LDAsIkkiXSxbNSwwLCJIX3t0XzJ9Xnt0XzF9Il0sWzQsMCwiSF57MSt0XzF9X3sxK3RfMn0iXSxbMSwwLCJIXntrLTErdF8xfV97ay0xK3RfMn0iXSxbMywwLCJcXGNkb3RzIl0sWzIsMCwiSF57ay0yK3RfMX1fe2stMit0XzJ9Il0sWzAsMCwiMCJdLFswLDEsIjAiXSxbMSwxLCJcXFBoaV57ay0xK3RfMX1fe2stMSt0XzJ9Il0sWzIsMSwiSF57ay0xK3RfMX1fe2stMSt0XzF9Il0sWzEsMF0sWzIsMSwiZF57ay0xfSJdLFszLDIsImRee2stMn0iXSxbNiw1LCJkXjEiXSxbNCw2LCItZF4wIl0sWzcsNF0sWzUsMywiZF57ay0zfSJdLFs5LDQsIm1fTCJdLFs5LDEwLCJkXjAiLDJdLFs4LDldLFsxMCw2LCJkXjAiXV0=
\begin{equation}
\begin{tikzcd}[ampersand replacement=\&]
	0 \& q^{2k-1} {H^{k-1}} \& \& q^{2k-3} {H^{k-2}} \& \cdots \& q{H^{0}} \& I \& 0 \\
	0 \& q^{2k} {\Phi^{k-1}} \& \& q^{2k-1} {H^{k-1}}
	\arrow[from=1-7, to=1-8]
	\arrow["{d^{-1}}", from=1-6, to=1-7]
	\arrow["{d^{-2}}", from=1-5, to=1-6]
	\arrow["{d^{-k+1}}", from=1-4, to=1-5]
	\arrow["{(-1)^{k-1}d^{-k}}", from=1-2, to=1-4]
	\arrow[from=1-1, to=1-2]
	\arrow["{m_L}", from=2-2, to=1-2]
	\arrow["{\epsilon\tilde{d}^{-k}}"', from=2-2, to=2-4]
	\arrow[from=2-1, to=2-2]
	\arrow["{d^{-k}}", from=2-4, to=1-4]
\end{tikzcd}.
\end{equation}

The final square is the middle row of the diagram in Lemma \ref{finalsquare} so in the relative homotopy category, the diagram is isomorphic to
\begin{equation}
	0 \xrightarrow{} q^{2k+1} \uwave{H^{k}} \xrightarrow{d^{-k-1}} q^{2k-1} {H^{k-1}} \xrightarrow{d^{-k}} q^{2k-3}H^{k-2} \xrightarrow{d^{-k+1}}  \cdots \xrightarrow{d^{-2}} q{H^{0}} \xrightarrow{d^{-1}} I \xrightarrow{} 0.
\end{equation}
\end{proof}

\subsection{The complex \(C^*(T_{2,k})\)}
\begin{corollary}
In the relative homotopy category, \(C^*(T_{2,k})\) is isomorphic to
\begin{equation}
	0 \xrightarrow{} q^{2k-1} {\Theta^{k-1}} \xrightarrow{d^{-k}} \cdots \xrightarrow{0} q^{5} {\Theta^2} \xrightarrow{d^{-3}} q^{3} {\Theta^{1}} \xrightarrow{0} q {\Theta^0} \xrightarrow{p} {O \otimes O} \xrightarrow{} 0,
\end{equation}
where \(d^{-1} := p\), \(d^{-j} := 0\) when \(j\) is even, and \(d^{-j} := 2\theta^{\bullet} - E_1 \theta\) when \(j\neq 1\) is odd.
\end{corollary}
\begin{proof}
The web \(I\) is replaced by a pair of oppositely oriented circles denoted by \(O \otimes O\) and each \(H\) is replaced by theta webs
\[\Theta := \includegraphics[valign=c]{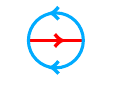},
\Theta^n := \includegraphics[valign=c]{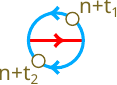}.\]
The foam \(h\) is replaced with the theta foam and the foams \(h^{\bullet}_L\) and \(h^{\bullet}_R\) are replaced by dotted theta foams
\[\theta := \includegraphics[valign=c]{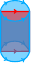}, \quad
\theta^{\bullet} := \includegraphics[valign=c]{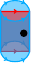},\]
where the dot in \(\theta^\bullet\) is on the forward most facet.
The zip foam \(z\) is replaced by a singular pair of pants
\[p := \includegraphics[valign=c]{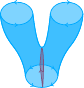}.\]
Finally, the differentials collapse so that \(d^{-1} = p\), \(d^{-j} = 0\) when \(j\) is even, and \(d^{-j} = 2\theta^{\bullet} - E_1 \theta\) when \(j\neq 1\) is odd.
\end{proof}

\section{The \(\slt\)-structure of the homology \(H^*(T_{2,k})\)}
\label{sl2-computation}

\subsection{The homology of \(T_{2,k}\) in terms of polynomials}
In this section, we compute the homology of the complex \(C^*(T_{2,k})\) as a \(\mathcal{U}(\slt)\#R_2\)-module and decompose it into a direct sum of \(\slt\) indecomposables.
As a first step, it is useful to identify the homology of the unknot with a submodule of the ring of algebraic functions equipped with the action of \(\slt\) extended via the differential operators in \ref{action-poly}.

The state spaces for \(O\) and \(\Theta\) are generated by the dotted and undotted cup and theta cup foams respectively:
\[\theta_\iota := \includegraphics[valign=c]{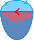} \quad
\theta^{\bullet}_\iota := \includegraphics[valign=c]{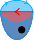}\quad
o_\iota := \includegraphics[valign=c]{cup.pdf}\quad
o^{\bullet}_\iota := \includegraphics[valign=c]{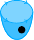}.\]
The dot in \(\theta^\bullet_\iota\) belongs to the forward most facet.

\begin{lemma}
\label{poly-rep}
The map 
\(\phi: H^*(O) \to (x-y)^{-1/2}\mathbf{k}[x,y]\) defined by
\begin{equation}
\begin{gathered}
o_\iota \mapsto (x-y)^{-1/2}\\
o^\bullet_\iota \mapsto (x-y)^{-1/2}x\\
\end{gathered}
\end{equation}
is a graded \(\mathcal{U}(\slt)\# R_2\)-module isomorphism.
\end{lemma}
\begin{proof}
The module \((x-y)^{-1/2}\mathbf{k}[x,y]\) is isomorphic to  \(\mathbf{k}[x,y]\) and the latter is
a free \(R_2\)-module with generators \(\{1,x\}\).
Thus \(\phi\) is an isomorphism of \(R_2\)-modules.

The map \(\phi\) additionally intertwines the action of \(\mathbf{k}[x]\) on \(O\) given by adding decorations with the action of \(\mathbf{k}[x]\) on \(\mathbf{k}[x,y]\) given by multiplication.
Moreover, the action of \(\slt\) on decorations in \(\mathbf{k}[x]\) agrees with the restriction of the \(\slt\) action along \(\mathbf{k}[x] \to \mathbf{k}[x,y]\).
Thus it is enough to show that \(\phi(g \cdot o_\iota) = g \cdot \phi(o_\iota)\):
\begin{equation}
\begin{aligned}
\mathbf{e} \cdot o_\iota &= 0 &
\mathbf{e} \cdot (x-y)^{-1/2} &= 0 \\
\mathbf{h} \cdot o_\iota &= o_\iota &
\mathbf{h} \cdot (x-y)^{-1/2} &= (x-y)^{-1/2} \\
\mathbf{f} \cdot o_\iota &= -\frac{1}{2} E_1 o_\iota &
\mathbf{f} \cdot (x-y)^{-1/2} &= -\frac{1}{2} (x+y)(x-y)^{-1/2}.
\end{aligned}
\end{equation}
\end{proof}

\begin{lemma}
\label{poly-rep-theta}
The map 
\(\psi: H^*(q\Theta^0) \to (x-y)^{-1}\mathbf{k}[x,y]\) defined by
\begin{equation}
\begin{gathered}
\theta_\iota \mapsto (x-y)^{-1}\\
\theta^\bullet_\iota \mapsto (x-y)^{-1}x\\
\end{gathered}
\end{equation}
is a graded \(\mathcal{U}(\slt)\# R_2\)-module isomorphism.
\end{lemma}

\begin{proof}
The proof is identical to the proof of Lemma \ref{poly-rep} with minor adjustments for the action on the theta cup.
To compute the \(\slt\) action on \(\theta_\iota\), decompose it as a pair of cups and a zip.
The crucial point is that the twists in \(\Theta^0\) cancel the action on the zip so that the resulting action does not depend on \(t_1\) or \(t_2\).
\end{proof}

\begin{lemma}
\label{poly-rep-twisted}
Let \(O^n\) denote the unknot with a hollow twist and a solid twist both labelled by \(n \in \mathbf{k}\).
The maps
\(\psi_n: H^*(q^{2n+1}\Theta^n) \to (x-y)^{n-1}\mathbf{k}[x,y]\) defined by
\begin{equation}
\begin{gathered}
\theta_\iota \mapsto (x-y)^{n-1}\\
\theta^\bullet_\iota \mapsto (x-y)^{n-1}x,\\
\end{gathered}
\end{equation}
and
\(\phi_n: H^*(q^{2n}O^n) \to (x-y)^{n-1/2}\mathbf{k}[x,y]\) defined by
\begin{equation}
\begin{gathered}
o_\iota \mapsto (x-y)^{n-1/2}\\
o^\bullet_\iota \mapsto (x-y)^{n-1/2}x\\
\end{gathered}
\end{equation}
are graded \(\mathcal{U}(\slt)\# R_2\)-module isomorphisms.
\end{lemma}

\begin{proof}
For any \(g \in \slt\) and \(n, m\in \mathbf{k}\),
\begin{equation}
\begin{aligned}
g \cdot (x-y)^{n-m} &=
\left(g\cdot (x-y)^{-m}\right)(x-y)^n + \left(g \cdot (x-y)^n \right)(x-y)^{-m} \\
&= \left(g\cdot (x-y)^{-m}\right)(x-y)^n + \left(\frac{g \cdot (x-y)^n}{(x-y)^n} \right)(x-y)^{n-m}.
\end{aligned}
\end{equation}
Thus the action of \(\slt\) on \((x-y)^{n-m}\mathbf{k}[x,y]\) can be viewed as a twist of the action on \((x-y)^{-m}\mathbf{k}[x,y]\) by the multiplication by \(\frac{g\cdot (x-y)^n}{(x-y)^n}\) endomorphism.
Lemma \ref{poly-rep-theta} identifies the extra twists on \(O^n\) and \(\Theta^n\) with multiplication by the same polynomials:
\begin{equation}
\begin{aligned}
\mathbf{e} &\mapsto 0 = \frac{\mathbf{e} (x-y)^n}{(x-y)^n}\\
\mathbf{h} &\mapsto -2n = \frac{\mathbf{h} (x-y)^n}{(x-y)^n}\\
\mathbf{f} &\mapsto n(x+y) =\frac{\mathbf{f} (x-y)^n}{(x-y)^n}.
\end{aligned}
\end{equation}
\end{proof}

\begin{theorem}
\label{homaspoly}
As a graded \(\mathcal{U}(\mathfrak{sl}_2)\# R_2\)-module 
\begin{align*}
H^0(T_{2,k})     &\cong (x-y)^{-1}\mathbf{k}[x,y], \\
H^{-j}(T_{2,k}) &\cong 0 \text{ for all odd } j, \\
H^{-j}(T_{2,k}) &\cong (x-y)^{j-2}\mathbf{k}[x,y]/(x-y)^{j-1}\mathbf{k}[x,y] \text{ for all even } 0 < j < k,\\
H^{-k}(T_{2,k})   &\cong (x-y)^{k-2}\mathbf{k}[x,y] \text{ if } k \text{ is even}.
\end{align*}
\end{theorem}

\begin{remark}
The definition of the \(\slt\)-action depended on parameters \(t_1, t_2 \in \mathbf{k}\).
The homology \(H^*(T_{2,k})\), however, is independent of them.
It is unclear if the homology depends on the parameters in general.
\end{remark}

\begin{proof}
Since all of the even differentials are zero, \(C^*(T_{2,k})\) is a direct sum of two-term complexes
\begin{equation}
\begin{aligned}
q\Theta^0 &\xrightarrow{p} O \otimes O \\
q^{2n+3}\Theta^{n+1} &\xrightarrow{2\theta^{\bullet} -E_1 \theta} q^{2n+1}\Theta^{n}.
\end{aligned}
\end{equation}
The first complex is found in homological degrees \(-1\) and \(0\).
The second complex is found in homological degrees \(-n-2\) and \(-n-1\).
If \(k\) is even there is an additional direct summand, \(q^{2k-1}\Theta^{k-1}\), in homological degree \(-k\).

Observe that the first complex is the same as the complex associated to an unknot with a single positive twist.
By the Reidemeister I move, it is relative homotopy equivalent to \(qO^{-1/2}\) in homological degree 0.

By Lemma \ref{poly-rep-twisted}, the second complex is isomorphic to
\begin{equation}
(x-y)^{n}\mathbf{k}[x,y] \to (x-y)^{n-1}\mathbf{k}[x,y]
\end{equation}
where the map is identified with the inclusion.
The homology of the complex is
\((x-y)^{n-1}\mathbf{k}[x,y]/(x-y)^n\mathbf{k}[x,y]\)
concentrated in degree \(-n-1\).
\end{proof}

% \begin{remark}
% The homology of the second complex can be computed differently by first fixing a basis for the state space of the theta web by the dotted and undotted and theta cup foams.
% In this basis the cohomology is \(\mathbf{k}[E_1,E_2]/(E_1^2-4E_2)\).
% Although the ideals \((x-y)\) and \(E_1^2-4E_2 = (x-y)^2\) are certainly different in the ring \(\mathbf{k}[x,y]\),
% the quotients \(\mathbf{k}[x,y]/(x-y)\) and \(\mathbf{k}[x,y]^{S_2}/(x-y)^2\) are isomorphic 
% as modules over \(\mathcal{U}(\slt)\# R_2\).
% \end{remark}

\subsection{Decomposition into Indecomposables}

\subsubsection{Review of Integral Highest Weight Theory for \(\slt\)}
Let \(V\) be a representation of \(\slt\).
A vector \(v \in V\) is called \textit{highest weight} if \(\mathbf{e} v = 0\).
If the vector subspace \(V_\lambda := \{ v \in V \mid \mathbf{h} v = \lambda v\}\) is non-zero, it is called the \(\lambda\)-\textit{weight space} of \(V\), for some \(\lambda \in \Z\).
If \(V\) is a direct sum of its weight spaces, then \(V\) is called a \textit{weight module}.

The finitely generated, indecomposable highest weight \(\slt\)-modules with finite dimensional weight spaces  and a fixed highest weight \(\lambda \in \Z\) are classified into the following isomorphism types:
\begin{itemize}
\item The finite dimensional simple \(L(\lambda)\) of dimension \(\lambda+1\) (if \(\lambda \geq 0\)).
\item The infinite-dimensional Verma module \(M(\lambda)\).
If \(\lambda < 0\), then \(M(\lambda)\) is simple.
Otherwise it contains a simple submodule isomorphic to \(M(-\lambda-2)\) with a simple quotient isomorphic to \(L(\lambda)\).
\item The dual Verma module \(M^*(\lambda)\).
If \(\lambda < 0\), then \(M^*(\lambda) \cong M(\lambda)\) is simple.
Otherwise it contains a finite dimensional submodule \(L(\lambda)\) with a simple quotient isomorphic to \(M(-\lambda-2)\).
\item The projective cover \(P(-\lambda-2)\) of the Verma \(M(-\lambda-2)\).
The projective cover has a submodule isomorphic to \(M(\lambda)\) with quotient isomorphic to \(M(-\lambda-2)\) and a submodule isomorphic to \(M(-\lambda-2)\) with quotient isomorphic to \(M^*(\lambda)\).
\end{itemize}

The only non-split extensions between indecomposable \(\slt\)-weight modules are the ones described as quotients above. For more details see \cite{Mazorchuk_sl2, Humphreys_O}.

\begin{theorem}
\label{irreps}
The homology \(H^*(T_{2,k})\) has the following direct sum decomposition into \(\mathfrak{sl}_2\)-indecomposables:
\begin{align*}
H^0(T_{2,k})     &\cong \bigoplus_{\lambda=-1}^{+\infty} M^*(-2\lambda) \\
H^{-j}(T_{2,k}) &\cong 0 \text{ for all odd } j \\
H^{-j}(T_{2,k}) &\cong M^*(-2(j-2)) \text{ for all even } 0 < j < k\\
H^{-k}(T_{2,k})   &\cong \bigoplus_{\lambda=k-2}^{+\infty}  M^*(-2\lambda) \text{ if } k \text{ is even. }
\end{align*}
\end{theorem}

\begin{proof}
Consider the \(\slt\)-module \(A := \mathbf{k}[x,y, (x-y)^{-1}]\) with a filtration
given by the submodules \(A^\lambda := (x-y)^\lambda\mathbf{k}[x,y]\).
The theorem will follow from a decomposition of \(A\) since the submodules \(A^\lambda\) and the quotients \(A^{\lambda}/A^{\lambda+1}\) are exactly the homology modules found in Theorem \ref{homaspoly}.
The argument is outlined here and details are provided in Lemma \ref{verma-cases} and Proposition \ref{direct-sum}.

Recall that the \(\lambda\)-weight space of \(\mathbf{k}[x,y]\) is the subspace of homogeneous degree \(\lambda\) polynomials.
This implies that \(\dim(A^{\lambda}_{\nu+1}) = \dim(A^{\lambda}_{\nu}) + 1\) for any weight \(\nu\) and thus
the associated graded pieces \(A^{\lambda}/A^{\lambda+1}\) are one dimensional in each weight.
Each quotient module \(A^{\lambda}/A^{\lambda+1}\) is isomorphic to a dual Verma \(M^*(-2\lambda)\) with highest weight vector \((x-y)^\lambda\).
In fact, the submodule generated by \((x-y)^{\lambda}\) is simple in the full module \(A\), so there are no further extensions.
Together this implies that \(A\) is the direct sum of dual Vermas and the decomposition of the homology is immediate.
\end{proof}

\begin{lemma}
\label{verma-cases}
For all \(\lambda \in \Z\), the vector \((x-y)^\lambda \in \mathbf{k}[x,y,(x-y)^{-1}]\) is a highest weight vector and the submodule it generates is isomorphic to a simple module.
\end{lemma}

\begin{proof}
To show that \((x-y)^\lambda\) is highest weight it is sufficient to show that \((x-y)\) is. Indeed,
\(
\mathbf{e} (x-y) = -1 - (-1) = 0
\).

Let \(V(-2\lambda)\) be the submodule generated by \((x-y)^\lambda\).
% \textit{Case \ref{case-negative}: \(\lambda > 0\)}.
If \(\lambda > 0\) then \(V(-2\lambda)\) has negative highest weight \(-2\lambda\) so it must be isomorphic to the simple module \(M(-2\lambda)\).
% \textit{Case \ref{case-integral}: \(\lambda \in \Z, \lambda \leq 0\)}.
To show that \((x-y)^{\lambda}\) generates \(L(-2\lambda)\) when \(\lambda \leq 0\), it is enough to show that
\(\mathbf{f}^{-2\lambda+1}(x-y)^{\lambda} = 0\).
The proof proceeds by induction.

In the base case, \((x-y)^0 = 1\) generates the trivial module.
Assume that \({(x-y)^{\lambda}}\) generates a finite dimensional simple \(L(-2\lambda)\) so that
\(\mathbf{f}^{-2\lambda}(x-y)^{\lambda} \neq 0\)
and
\({\mathbf{f}^{-2\lambda+1}(x-y)^{\lambda}} = 0\).
% To show that \(\mathbf{f}^{-2\lambda+3}(x-y)^{\lambda-1} = 0\),
Now apply the generalized Leibniz rule:
\begin{equation}
\begin{aligned}
\label{lieb}
\mathbf{f}^{-2\lambda+3}(x-y)^{\lambda-1} &= 
\mathbf{f}^{-2\lambda+3}((x-y)^{-1}(x-y)^{\lambda}) \\ &=
\sum_{k=0}^{-2\lambda+3} {-2\lambda + 3 \choose k} \mathbf{f}^k(x-y)^{-1} \mathbf{f}^{-2\lambda+3-k} (x-y)^{\lambda}.
\end{aligned}
\end{equation}
By the induction hypothesis, \(\mathbf{f}^{-2\lambda+3-k}(x-y)^{\lambda} = 0\) whenever \(k \leq 2\) and
direct computation shows that
\(\mathbf{f}^k(x-y)^{-1} = 0\) when \(k \geq 3\).
Therefore all terms in the sum vanish.
% \begin{equation}
% \begin{aligned}
% \mathbf{f} (x-y)^{-1} &= -(x+y)(x-y)^{-1}\\
% \mathbf{f}^2 (x-y)^{-1} &= 2xy(x-y)^{-1}\\
% \mathbf{f}^3 (x-y)^{-1} &= 0.
% \end{aligned}
% \end{equation}
\end{proof}

\begin{proposition}
\label{direct-sum}
As an \(\slt\)-representation,
\[
\mathbf{k}[x,y, (x-y)^{-1}] \cong \bigoplus_{\lambda \in \Z} M^*(-2\lambda).
\]
\end{proposition}
\begin{proof}
When \(\lambda > 0\), \(L(-2\lambda) \cong M^*(-2\lambda)\).
When \(\lambda \leq 0\), the submodule \(L(-2\lambda)\) is finite dimensional and we will show that it is contained in a submodule isomorphic to \(M^*(-2\lambda)\) by finding a vector \(w\) with weight \(2\lambda-2\) such that \(\mathbf{e} w = \mathbf{f}^{-2\lambda} (x-y)^\lambda\).
Comparing the dimensions of the weight spaces and number of dual Vermas shows that there are no further extensions.

Recall the operators \(L_k\) from Definition \ref{action-poly}.
A component of \(L_{-2\lambda+1}(x-y)^\lambda\) will satisfy the condition required of \(w\).
First note that if \(k \geq -1\), \(L_k\) acts on the submodule \((x-y)^n\mathbf{k}[x,y]\):
For any \(p(x,y) \in \mathbf{k}[x,y]\),
\begin{equation*}
\begin{aligned}
   L_k \cdot (x-y)^n p(x,y)
% &= (x-y)^n L_k \cdot p(x,y) + \left(L_k \cdot (x-y)^n\right) p(x,y) \\
&= (x-y)^n L_k \cdot p(x,y) + n (x-y)^{n-1} (x^{k+1}-y^{k+1})p(x,y) \\
&= (x-y)^n L_k \cdot p(x,y) + n (x-y)^{n} h_k(x,y) p(x,y) \\
&=(x-y)^{n}\left[L_k \cdot p(x,y) + n h_k(x,y)p(x,y) \right].
\end{aligned}
\end{equation*}

Suppose \(v\) is a highest weight vector of weight \(\lambda\).
The operators \(\mathbf{e}\) and \(L_n\) satisfy the relation
\begin{equation}
\label{witt-rel}
\mathbf{e} L_n v = [\mathbf{e} ,L_n]v + L_n\mathbf{e}v = -(n+1)L_{n-1} v.
\end{equation}
The submodule \(A^\lambda\) is preserved by \(\mathbf{e}\)
so inductively applying (\ref{witt-rel}) implies that \(L_n v\) has a component in \(A^{\lambda}/A^{\lambda+1}\) for all \(\lambda\).
On the other hand,
the \(\mathfrak{sl}_2\)-submodule generated by \(v\) is simple so it lies entirely in the quotient \(A^\lambda/A^{\lambda+1}\).
Moreover \(A^{\lambda}/A^{\lambda+1}\) is one-dimensional,
so the component of \(L_{-2\lambda}v\) in \(A^{\lambda}/A^{\lambda+1}\) is a scalar multiple of \(\mathbf{f}^{-2\lambda}v\).
Finally, this implies that the component \(w\) of \(L_{-2\lambda+1}v\) in \(A^{\lambda}/A^{\lambda+1}\) or at least a scalar multiple of it, satisfies \(\mathbf{e} w =  \mathbf{f}^{-2\lambda}v\).
\end{proof}

% \begin{proof}[Proof of Theorem \ref{irreps}]
% The decomposition of homological degrees \(0\) and \(-k\) follows immediately from Proposition \ref{direct-sum} by restricting to a submodule in the filtration.
% In even homological degree \(j\), the quotient module \((x-y)^{j-2}\mathbf{k}[x,y]/(x-y)^{j-1}\mathbf{k}[x,y]\) contains a single highest weight vector and is isomorphic to \(M^*(-2(j-2))\).
% \end{proof}

In Lemma \ref{poly-rep-twisted}, the representation \((x-y)^{n-1/2}\mathbf{k}[x,y]\) was also found among the homologies of simple knots.
Similar results apply to modules of this form.

\begin{proposition}
\label{direct-sum-half}
For any \(\lambda \in \frac{1}{2}+\Z\), the vector \((x-y)^\lambda\)
is a highest weight vector in \((x-y)^{-1/2}\mathbf{k}[x,y,(x-y)^{-1}]\) and
the submodule it generates is isomorphic to the Verma module \(M(-2\lambda)\).
If \(\lambda \leq -\frac{1}{2}\), then the Verma submodule generated by \((x-y)^{-\lambda+1}\) is the unique Verma submodule of the one generated by \((x-y)^{\lambda}\).
As an \(\slt\)-representation,
\[
(x-y)^{-1/2}\mathbf{k}[x,y, (x-y)^{-1}] \cong \bigoplus_{\lambda\leq 0} P(2\lambda-1).
\]
\end{proposition}

\begin{proof}
An induction argument similar to the one used in Lemma \ref{verma-cases} shows that \(\mathbf{f}^{-2\lambda+1}(x-y)^{\lambda}\)
is a non-zero scalar multiple of \((x-y)^{-\lambda+1}\).
This is enough to prove the first two statements.
When \(\lambda \geq -\frac{1}{2}\), the argument from Proposition \ref{direct-sum} applies with minor modification and shows that the copy of \(M(-2\lambda)\) generated by \((x-y)^{\lambda}\) is in extension with a submodule isomorphic to \(M(2\lambda-1)\).
The unique extension between these modules is \(P(2\lambda-2)\).
The vector \((x-y)^{1/2}\) generates the Verma \(M(-1)\) which is projective and is not in any non-trivial extensions, so \(P(-1)\) appears in the direct sum as well.
Details are left to the reader.
\end{proof}

\begin{corollary}
As an \(\slt\)-representation,
\[
H^*(O) \cong (x-y)^{-1/2}\mathbf{k}[x,y] \cong P(-3) \oplus P(-1) \oplus \bigoplus_{\lambda \leq -2} M^*(2\lambda-1).
\]
\end{corollary}

\begin{remark}
The proof of Proposition \ref{direct-sum} depends entirely on the filtration given by \((x-y)\)-divisibility.
Propositions \ref{direct-sum} and \ref{direct-sum-half} show that the associated graded components are dual Verma modules.
This filtration is essential to topological applications where \((x-y)\), often called \(G\) or \(H\), is used to define numerical invariants of links \cite{sano, Iltgen_2025}.
The presence of an \(\slt\)-action whose structure is controlled by \((x-y)\) may produce finer invariants with applications to topology.
\end{remark}

% \printbibliography
\bibliographystyle{alphaurl}
\bibliography{main}

\end{document}